\titleformat{\section}[runin]{\bfseries\filcenter}{\thesection}{1em}{}
\renewcommand{\thesection}{\arabic{section}}
\title{\large \bf The conjugacy class number $k(G)$ - a different perspective}
\author{\small \bf Deepak Gumber\footnote{Research supported by UGC under the Research Award Scheme}  and Hemant Kalra\\
\small \em School of Mathematics and Computer Applications\\
\small \em Thapar University, Patiala - 147 004,
India\\
}
\date{}
\newtheorem{thm}{Theorem}
\newtheorem{pp}[thm]{Proposition}
\begin{document}
\maketitle

In 1903, Landau \cite{lan} proved that for each $k\ge 1$, the equation
$$1=\frac{1}{m_1}+\frac{1}{m_2}+\cdots +\frac{1}{m_k}\;\;\;\;\;(m_1\ge m_2\ge\cdots\ge m_k)$$
has only finitely many solutions over the positive integers $m_i$. Let $G$ be a finite group and let $k(G)$ denote the number of conjugacy classes $C_1,C_2,\ldots,C_{k(G)}$ of $G$. The above equation holds in $G$ if $k=k(G)$ and $m_i=|G|/|C_i|$. It follows that there are only finitely many 
non-isomorphic finite groups with a given number of  conjugacy classes. The problem of the classification of finite groups with a given number of conjugacy classes has a long history and goes back to Burnside. The interested readers can refer to \cite{lop1} and \cite{lop2} and the papers quoted there for a detailed account of this problem and, in particular, for the list of the groups with up to twelve conjugacy classes.

Let $m(G)$ denote the least positive integer $n$ such that the union of any $n$ distinct non-trivial conjugacy classes of $G$ together with the identity of $G$ is a subgroup of $G$. Observe that $m(G)\le k(G)-1$. In \cite{kalgum}, we have classified all finite groups $G$ for $m(G)=1,2$ and 3. It is proved that $m(G)=1$ if and only if $G$ is an elementary abelian 2-group; $m(G)=2$ if and only if $G$ is isomorphic to $C_3$ or to $S_3$; and $m(G)=3$ if and only if $G$ is isomorphic to one of the groups $C_4,C_2\times C_2,D_4$ and $A_5$. It follows that $m(G)=2$ if and only if $k(G)=3$, and $m(G)=3$ if and only if $k(G)=4$. Therefore, for these two particular cases, $m(G)=k(G)-1$. A natural question which arises here is:
$$\mbox{Is it true that}\; m(G)=k(G)-1\;\mbox{for all}\;m(G)\ge 2\,?$$ 
We answer the question in affirmative in Theorem 2. Our notation is mostly standard and that of \cite{gor}. A subgroup $H$ of $G$ is called a complement for a normal subgroup $N$ of $G$ in $G$ if $G=NH$ and $N\cap H =1$. If $N$ has a complement $H$ in $G$, then we say that $G$ splits over $N$ with $H$ as complement. A group $G$ is called a rational group if every element $x$ of $G$ is conjugate to $x^m$, where $m$ is a natural number co-prime to the order $|x|$ of $x$. To prove our theorem, we need the following deep result \cite[Proposition 21]{kle} from group theory:

\begin{pp}
Let $G$ be a finite rational group with an abelian Sylow $2$-subgroup $H$. Then $H$ is elementary abelian, $G$ splits over $G'$ with $H$ as complement, and $G'$ is a $3$-group.
\end{pp}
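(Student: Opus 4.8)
The plan is to separate the three assertions and to squeeze as much as possible out of the elementary characterization of rationality before invoking structure theory. Recall that $G$ is rational exactly when every irreducible character is rational-valued, equivalently when $x$ is conjugate to $x^m$ for each $x\in G$ and each $m$ coprime to $|x|$. Two consequences cost nothing. First, homomorphic images of rational groups are rational, so $G/G'$ is a rational abelian group; and a rational abelian group must be an elementary abelian $2$-group, because there $x\sim x^m$ forces $x=x^m$, i.e. $|x|\mid m-1$ for all $m$ coprime to $|x|$, which is impossible unless $|x|\le 2$. Hence $G/G'$ is an elementary abelian $2$-group, and since $H$ is a Sylow $2$-subgroup this already gives $G=HG'$. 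Second, I would get that $H$ is elementary abelian directly from fusion: as $H$ is abelian, Burnside's fusion theorem says $N_G(H)$ controls fusion in $H$, and because $H$ is a normal Sylow $2$-subgroup of $N_G(H)$ the quotient $W:=N_G(H)/C_G(H)\le\operatorname{Aut}(H)$ has odd order. If some $x\in H$ had order $\ge 4$, rationality gives $x\sim x^{-1}$, so some $n\in N_G(H)$ inverts $x$; the automorphism induced by $n$ then stabilises $\langle x\rangle$ and restricts to inversion, an automorphism of even order on $\langle x\rangle\cong C_{2^k}$ with $k\ge2$, contradicting that its image in $W$ has odd order. Thus $H$ is elementary abelian.

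For the splitting I would reduce everything to one statement. By Burnside's normal $p$-complement theorem, if $N_G(H)=C_G(H)$ then $G$ has a normal $2$-complement $K$ with $G=K\rtimes H$ and $K$ of odd order; and since $G/G'$ is a $2$-group while $G/K\cong H$ is abelian, one checks $G'\le K$ and $K/G'$ is an odd $2$-group, forcing $K=G'$. Hence parts 2 and 3 reduce to showing $N_G(H)=C_G(H)$ (equivalently $W=1$) together with $|G'|$ a power of $3$.

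A further gain that is again essentially free is a restriction on the odd primes. Every $2$-subgroup of $G$ lies in a conjugate of $H$ and is therefore elementary abelian, so every $2$-element of $G$ has order at most $2$. Now let $p$ be an odd prime dividing $|G|$ and let $g$ have order $p$. Rationality forces $N_G(\langle g\rangle)$ to induce all of $\operatorname{Aut}(\langle g\rangle)\cong C_{p-1}$, so some $t\in G$ induces on $\langle g\rangle$ an automorphism of order equal to the $2$-part of $p-1$; the $2$-part $t_2$ of $t$ induces that same $2$-power automorphism and is therefore a $2$-element of at least that order. Since $2$-elements have order at most $2$, the $2$-part of $p-1$ is at most $2$, i.e. $p\equiv 3\pmod 4$; in particular $5\nmid|G|$.

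The genuine obstacle is the passage from these local constraints to global solvability and to the exact prime set $\{2,3\}$, and this is where I expect the real work, and the reason the result is called ``deep''. My plan would be first to establish that $G$ is solvable by appealing to Walter's classification of finite groups with abelian Sylow $2$-subgroups: the nonabelian simple groups that occur there, namely $\operatorname{PSL}(2,q)$ with $q\equiv\pm3\pmod 8$ or $q=2^n$, the small Ree groups, and $J_1$, all fail to be rational (for instance $A_5\cong\operatorname{PSL}(2,4)$ already has irrational character values on its $5$-cycles), so no such factor can survive in a rational group and $G$ must be solvable. I would then invoke Gow's theorem that a solvable rational group is a $\{2,3,5\}$-group; combined with $5\nmid|G|$ this makes $G$ a $\{2,3\}$-group with elementary abelian Sylow $2$-subgroup. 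Finally a coprime-action analysis of the faithful odd-order action of $W$ on $H$ should force $W=1$: a nontrivial $W$ would contain an order-$3$ automorphism $w$ and hence produce an $A_4$-type section $[H,w]\rtimes\langle w\rangle$ that I expect to be incompatible with rationality of the ambient group. This yields the normal $2$-complement $K=G'$ of odd order, and since $G$ is a $\{2,3\}$-group, $G'$ is a $3$-group. Making this last coprime-action step rigorous, and fully justifying the exclusion of the simple groups from rational sections, are the points I expect to be the most delicate.
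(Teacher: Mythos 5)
First, a point of comparison: the paper does not prove this proposition at all --- it is imported verbatim from Kletzing's book as \cite[Proposition 21]{kle} --- so there is no in-paper proof to measure your argument against; I can only assess it on its own terms. Your elementary reductions are correct and cleanly done: $G/G'$ is rational and abelian, hence elementary abelian of exponent $2$, giving $G=G'H$; Burnside fusion together with the odd order of $W=N_G(H)/C_G(H)$ forces $H$ to be elementary abelian; the splitting statement reduces correctly to producing a normal $2$-complement; and the congruence $p\equiv 3\pmod 4$ for odd primes $p$ dividing $|G|$ is properly derived from the bound on orders of $2$-elements.

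The two steps you flag as delicate are, however, genuine gaps, and they carry the entire weight of the second and third assertions. (1) Solvability: you argue that the simple groups in Walter's list are not rational, ``so no such factor can survive in a rational group.'' That inference is invalid. Rationality passes to quotients but not to subgroups, normal subgroups, or composition factors: $S_5$ is rational and has the non-rational $A_5$ as a composition factor, so irrationality of $S$ (your $A_5$ example included) does not exclude $S$ from being involved in a rational group. What is actually needed is that no rational group with abelian Sylow $2$-subgroups can have such a composition factor, and establishing that --- by a minimal-counterexample analysis of almost simple sections, or by the character-theoretic machinery Kletzing develops, or by appeal to the later Feit--Seitz determination of composition factors of rational groups --- is precisely the deep content of the proposition. (2) The step $W=1$: your proposed contradiction, that an $A_4$-type section $[H,w]\rtimes\langle w\rangle$ is ``incompatible with rationality of the ambient group,'' is false as a general principle, since $S_4$ is rational and contains $A_4$ as a normal subgroup. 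Any correct argument here must again use the abelian Sylow $2$-subgroup hypothesis globally, not just the existence of the section, and you have not supplied one. Until both of these points are closed, the proof establishes only that $H$ is elementary abelian and that $G=G'H$.
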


\begin{thm}
Let $G$ be a finite group. Then $m(G)=k(G)-1$ for all $m(G)\ge 2$.
\end{thm}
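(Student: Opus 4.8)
The plan is to replace the statement by its cleaner equivalent: \emph{no finite group satisfies $2\le m(G)\le k(G)-2$}. Since the union of all $k(G)-1$ non-trivial classes with the identity is $G$ itself, the property defining $m(G)$ always holds at $n=k(G)-1$, so $m(G)\le k(G)-1$ unconditionally; hence ruling out the intermediate range forces $m(G)=k(G)-1$ whenever $m(G)\ge 2$. Write $k=k(G)$, list the non-trivial classes as $C_1,\dots,C_{k-1}$, and for $1\le n\le k-1$ let $P(n)$ be the assertion that $\{1\}\cup C_{i_1}\cup\cdots\cup C_{i_n}$ is a subgroup for \emph{every} choice of $n$ distinct non-trivial classes, so that $m(G)$ is the least $n\ge 1$ with $P(n)$. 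The engine of the proof is a single downward-propagation lemma: if $P(n)$ holds and $n\le k-2$, then $P(n-1)$ holds as well.

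To prove the lemma I would fix an arbitrary selection of $n-1$ distinct non-trivial classes and set $S=\{1\}\cup C_{j_1}\cup\cdots\cup C_{j_{n-1}}$. The hypothesis $n\le k-2$ guarantees that at least $(k-1)-(n-1)=k-n\ge 2$ non-trivial classes lie outside this selection; picking two of them, $C_a$ and $C_b$, the sets $H_a=S\cup C_a$ and $H_b=S\cup C_b$ are each unions of exactly $n$ distinct non-trivial classes with the identity, hence subgroups by $P(n)$. Because distinct conjugacy classes are pairwise disjoint, the set-theoretic intersection $H_a\cap H_b$ is exactly $S$; being an intersection of two subgroups, $S$ is therefore a subgroup. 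As the $(n-1)$-selection was arbitrary, $P(n-1)$ holds, proving the lemma.

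The theorem then follows at once. Suppose $m(G)=n_0\ge 2$; minimality gives that $P(n_0)$ holds while $P(n_0-1)$ fails, and $n_0-1\ge 1$ makes this a genuine constraint. Were $n_0\le k-2$, the lemma would supply $P(n_0-1)$, a contradiction, so $n_0\ge k-1$; together with $n_0\le k-1$ this yields $m(G)=k-1=k(G)-1$. I expect the only real subtlety to be the counting in the lemma --- ensuring two distinct classes always remain outside an arbitrary $(n-1)$-selection, which is precisely the content of the hypothesis $n\le k-2$ and which fails exactly at the top value $n=k-1$, where a single class is left and the two extensions collapse. That borderline is genuine: for a nonabelian simple group the only class-unions that are subgroups are $\{1\}$ and $G$, so $P(k-1)$ holds yet $P(k-2)$ fails, confirming that the lemma must not reach $n=k-1$. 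Notably this route is entirely combinatorial and does not appear to require the rational-group structure theory of Proposition 1; that result would enter only for the finer question of classifying which groups realize each admissible value of $m(G)$, rather than for the equality itself.
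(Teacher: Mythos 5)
Your proof is correct, and it takes a genuinely different and much more elementary route than the paper. The paper splits into cases: for abelian $G$ it takes two overlapping $m$-fold class-unions $H,K$ and derives a contradiction from $|HK|=|H||K|/|H\cap K|=(m+1)^2/m$ not being an integer (an argument that needs $HK$ to be a subgroup, hence needs commutativity), while for non-abelian $G$ it shows that $m(G)\le k(G)-2$ would force every element to be conjugate to all of its non-trivial powers, making $G$ a rational group of even order with all elements of prime order; it then invokes Kletzing's structure theorem (Proposition 1), a fixed-point-free automorphism argument and Gorenstein's Theorem 10.1.4 to pin $G$ down to $S_3$ and reach a contradiction. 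Your observation that one should intersect rather than multiply the two overlapping class-unions --- $H_a\cap H_b=S$ because distinct classes are disjoint, and an intersection of subgroups is always a subgroup --- removes the need for commutativity entirely and collapses both cases into the single downward-propagation lemma $P(n)\wedge(n\le k-2)\Rightarrow P(n-1)$. The counting you flag as the one delicate point (needing two classes outside an arbitrary $(n-1)$-selection, which is exactly $n\le k-2$) is handled correctly, and your quantification over all $(n-1)$-selections is what the paper's abelian argument would have needed had it tried to use intersections. The upshot is a fully combinatorial proof of the same theorem with no input from the theory of rational groups; what your argument does not recover, and what Proposition 1 is really doing in the paper's hands, is any structural information about which groups attain a given value of $m(G)$ --- but for the stated equality none of that machinery is needed.
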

\begin{proof}
First suppose that $G$ is abelian and that $m(G)=m<k(G)-1$. Then $G$ has at least $m+1$ distinct non-trivial conjugacy classes $C_1,C_2,\ldots ,C_{m+1}$. Let $H=1\cup_{i=1}^{m}C_i$ and let $K=1\cup_{i=1}^{m-1}C_i\cup C_{m+1}$. Then $H$ and $K$ are subgroups of $G$ and the subgroup $HK$ of $G$ has order 
$$|HK|=\frac{|H||K|}{|H\cap K|}=\frac{(m+1)^2}{m},$$
which is not a positive integer. It thus follows that $m(G)=k(G)-1$. Now suppose that $G$ is non-abelian and $m(G)<k(G)-1$. Then, for any two distinct non-trivial conjugacy classes $C_1$ and $C_2$, there is a subgroup of $G$ containing $C_1$ but not $C_2$. We conclude from this that every element of $G$ is conjugate to any of its non-trivial powers. In particular, all elements of $G$ are of prime order, possibly for various primes, and, since $x$ is conjugate to $x^{-1}$, $|G|$ is even. It follows that $G$ is a rational group with an elementary abelian Sylow 2-subgroup $H$. Therefore, by Proposition 1, $G$ splits over $G'$ with $H$ as complement and $G'$ is a Sylow 3-subgroup. For $1\ne h\in H$, the mapping 
$\varphi_h :G'\rightarrow G'$ defined by $$\varphi_h(x)=h^{-1}xh$$ is an automorphism of order 2. If $\varphi_h(g)=g$ for some $1\ne g\in G'$, then $gh$ is of composite order, which is not so. Thus $\varphi_h$ is a fixed-point-free automorphism of order 2 and therefore, by \cite[Theorem 10.1.4]{gor}, $G'$ is abelian and $\varphi_h(x)=x^{-1}$ for all $x\in G'$. It follows that 
$$|x^G|=\frac{|G|}{|C_G(x)|}=\frac{|G|}{|G'|}=2$$ for all $x\in G'$ and hence $|G|=2.3^b$ for some $b\ge 1$. If $|G'|>3$, then there exist $x,y\in G'$ such that $\langle x\rangle\ne\langle y\rangle$ and therefore, by hypothesis, there is a subgroup of $G$ containing $x^G$ and $y^G$ but not containing $(xy)^G$. This is not possible and hence $G\simeq S_3$. But in $S_3$, $m(G)=k(G)-1=2$, which is a contradiction to our hypothesis. This completes the proof.
\end{proof}

\vspace{1em}

\noindent{\bf Acknowledgment.} Authors are thankful to Prof. Evgeny Khukhro for useful discussions. The present proof for non-abelian case is inspired by his arguments.

\end{document}